\newtheorem{theorem}{Theorem}[section]
\newtheorem{lemma}[theorem]{Lemma}
\theoremstyle{definition}
\newtheorem{defn}{Definition}[section]
\theoremstyle{remark}
\newtheorem{remark}{Remark}[section]
\newcommand\Sp{\operatorname{Sp}}
\newcommand\Id{\operatorname{Id}}
\begin{document}

\title{Second order asymptotics for Krein indefinite multipliers with multiplicity two}
\author{Yinshan Chang\thanks{College of Mathematics, Sichuan University, Chengdu 610065, China\\
Email: ychang@scu.edu.cn\\
Supported by NSFC Grant 11701395 and the Fundamental Research Funds for the Central Universities (No. YJ201661)}  and Jingzhi Yan\thanks{College of Mathematics, Sichuan University, Chengdu 610065, China\\
Email: jyan@scu.edu.cn\\
Supported by the Fundamental Research Funds for the Central Universities (No. YJ201660)}}

\date{}

\maketitle

\begin{abstract}
 We consider linear Hamiltonian equations in $\mathbb{R}^{4}$ of the following type
\begin{equation}
 \frac{\mathrm{d}\gamma}{\mathrm{d}t}(t)=J_{4}A(t)\gamma(t), \gamma(0)\in\Sp(4,\mathbb{R}),
\end{equation}
where $J=J_{4}\overset{\text{def}}{=}\begin{bmatrix}0 & \Id_2\\-\Id_2 & 0\end{bmatrix}$ and $A:t\mapsto A(t)$ is a $C^1$-continuous curve in the space of $4\times 4$ real matrices which are symmetric. We obtain second order asymptotics for the eigenvalues bifurcated from non-real Krein indefinite eigenvalues with multiplicity two.
\end{abstract}

\section{Introduction}

We consider linear Hamiltonian equations in $\mathbb{R}^{4}$ of the following type
\begin{equation}\label{eq: ham system R modified initial cont pos}
 \frac{\mathrm{d}\gamma}{\mathrm{d}t}(t)=J_{4}A(t)\gamma(t), \gamma(0)\in\Sp(4,\mathbb{R}),
\end{equation}
where $J=J_{4}\overset{\text{def}}{=}\begin{bmatrix}0 & \Id_2\\-\Id_2 & 0\end{bmatrix}$ and $A:t\mapsto A(t)$ is a $C^1$-continuous curve in the space of $4\times 4$ real matrices which are symmetric.

The system \eqref{eq: ham system R modified initial cont pos} arises naturally from perturbations of linearized Hamiltonian equations. Indeed, let $\varepsilon\in\mathbb{R}$ be a \emph{real} perturbation parameter. Consider
\begin{equation}\label{eq: perturbed ham system R}
 \frac{\partial\gamma}{\partial t}(t,\varepsilon)=J_{4}A(t,\varepsilon)\gamma(t,\varepsilon),\quad \gamma(0,\varepsilon)=\Id_{4},
\end{equation}
where
$\frac{\partial A}{\partial \varepsilon}(t,\varepsilon)$ is jointly continuous . Then, for fixed $T$, as $\varepsilon$ varies, the endpoint matrix $\gamma(T,\varepsilon)$ is a $C^{2}$-curve satisfying \eqref{eq: ham system R modified initial cont pos}. More precisely,
\begin{equation}\label{eq: equation for partial partial gamma T epsilon}
 \frac{\mathrm{\partial}}{\mathrm{\partial}\varepsilon}\gamma(T,\varepsilon)=J_{4}B(T,\varepsilon)\gamma(T,\varepsilon),
\end{equation}
where
\begin{align}\label{defn: BTepsilon}
 B(T,\varepsilon)=&-J_{4}\frac{\partial\gamma}{\partial\varepsilon}(T,\varepsilon)\gamma(T,\varepsilon)^{-1}\notag\\
 =&\int_{0}^{T}(\gamma(T,\varepsilon)^{-1})^{T}\gamma(t,\varepsilon)^{T}\frac{\partial}{\partial\varepsilon}A(t,\varepsilon)\gamma(t,\varepsilon)\gamma(T,\varepsilon)^{-1}\,\mathrm{d}t.
\end{align}
Please see \cite[Eq. (5) and Appendix A]{KLC1} for a proof.

Many people have studied the system \eqref{eq: ham system R modified initial cont pos} or \eqref{eq: perturbed ham system R}, see Ekeland \cite{EkelandMR1051888}, Yakubovich and Starzhinskii\cite{YakubovichStarzhinskiiMR0364740}, and the references there. We are interested in the bifurcation of a Krein indefinite eigenvalue with multiplicity two. The first order asymptotics and qualitative behavior of the bifurcated eigenvalues were firstly discovered by Krein and Lyubarskii in \cite{KreinLyubarskiiMR0142832} for the end-point matrix $\gamma(T,\varepsilon)$ of perturbed linear Hamiltonian equations in $\mathbb{R}^{2n}$ under certain positivity and linearity assumption on $A(t,\varepsilon)$. Recently, a general version of the Krein-Lyubarskii theorem was obtained by Chang, Long and Wang in \cite{KLC1} for $C^1$ paths of symplectic matrices (corresponding to the solution of \eqref{eq: ham system R modified initial cont pos}). In the present paper, we study the second order asymptotics and the derivative of the sum of bifurcated Krein multipliers by adapting the argument in \cite{KLC1}. Our main results are the following two theorems.

\begin{theorem}\label{thm: sum of eig t}
 Consider the solution of \eqref{eq: ham system R modified initial cont pos}. Suppose $\lambda_0=e^{\sqrt{-1}\theta_0}\in U$ is an eigenvalue of $\gamma(0)$. Suppose that $\lambda_0\neq\pm 1$. Assume that
\begin{equation}\label{eq: defn of eta1 eta2}
 \gamma(0)(\eta_1,\eta_2,\bar{\eta}_1,\bar{\eta}_2)=(\eta_1,\eta_2,\bar{\eta}_1,\bar{\eta}_2)\begin{bmatrix}
 \lambda_0 & \lambda_0 & 0 & 0\\
 0 & \lambda_0 & 0 & 0\\
 0 & 0 & \bar{\lambda}_0 & \bar{\lambda}_0\\
 0 & 0 & 0 & \bar{\lambda}_0
 \end{bmatrix}
\end{equation}
and that $\langle A(0)\eta_1,\eta_1\rangle\neq 0$.\footnote{The existence of $\eta_1$ and $\eta_2$ satisfying \eqref{eq: defn of eta1 eta2} is equivalent to the assumption that the geometric multiplicity of $\lambda_0$ is $1$.} Denote by $\lambda_1(t)$ and $\lambda_2(t)$ the eigenvalues of $\gamma(t)$ bifurcated from $\lambda_0$. Then, for $j=1,2$, we have that
 \begin{align}\label{eq: lambdajtsecondexpansion}
  \lambda_j(t)=&\lambda_0+(-1)^{j}\lambda_0\cdot\sqrt{\frac{\langle A(0)\eta_1,\eta_1\rangle}{\langle\eta_2,J_4\eta_1\rangle}t}\notag\\
  &+
  \frac{\lambda_0}{2}\left(\frac{\langle A(0)\eta_1,\eta_1\rangle}{\langle\eta_2,J_4\eta_1\rangle}+\frac{\langle A(0)\eta_1,\eta_2\rangle}{\langle\eta_1,J_{4}\eta_2\rangle}+\frac{\langle A(0)\eta_2,\eta_1\rangle}{\langle\eta_2,J_{4}\eta_1\rangle}-\frac{\langle A(0)\eta_1,\eta_1\rangle\langle\eta_2,J_4\eta_2\rangle}{\langle\eta_2,J_{4}\eta_1\rangle\langle\eta_1,J_{4}\eta_2\rangle}\right)t+o(t).
 \end{align}
 Consequently, we have that
 \begin{equation}\label{eq: real part sum of bifurcated eigenvalues}
  \mathrm{Re} \left(\bar\lambda_0\frac{\mathrm{d}}{\mathrm{d}t}(\lambda_1(t)+\lambda_2(t))|_{t=0}\right)=\frac{\langle A(0)\eta_1,\eta_1\rangle}{\langle\eta_2,J_4\eta_1\rangle}.
 \end{equation}
\end{theorem}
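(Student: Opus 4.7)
My plan is a second-order Puiseux-type perturbation analysis of the defective eigenvalue $\lambda_0$, reducing to a computation of symplectic pairings on the generalized eigenspace $E:=\Span(\eta_1,\eta_2)$.

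First, I would introduce the Riesz spectral projection $P(t)$ onto the eigenvalue cluster of $\gamma(t)$ bifurcating from $\lambda_0$; then $P(0)$ projects $\mathbb{C}^4$ onto $E$ along $\bar E:=\Span(\bar\eta_1,\bar\eta_2)$, both subspaces are $\gamma(t)$-invariant, and the sum and product of the two bifurcated eigenvalues are the smooth scalar functions
\begin{align*}
s(t)&=\lambda_1(t)+\lambda_2(t)=\mathrm{tr}(\gamma(t)P(t)),\\
\pi(t)&=\lambda_1(t)\lambda_2(t)=\tfrac12\bigl[s(t)^2-\mathrm{tr}(\gamma(t)^2 P(t))\bigr].
\end{align*}
Using the standard identity $\mathrm{tr}(XP'(0))=0$ whenever $[X,P(0)]=0$ (a consequence of $P^2=P$), this yields the clean formulas
$$
s'(0)=\mathrm{tr}(\gamma'(0)P(0)),\qquad \pi'(0)=2\lambda_0\, s'(0)-\mathrm{tr}(\gamma(0)\gamma'(0)P(0)).
$$

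Second, I would compute these traces explicitly using the dual basis $f_1,f_2\in E$ of the Hermitian symplectic form $\omega(u,v):=\langle u, J_4 v\rangle$. From $\gamma(0)^T J_4\gamma(0)=J_4$ together with the Jordan relation $\gamma(0)\eta_2=\lambda_0(\eta_1+\eta_2)$ one derives $\omega(\eta_1,\eta_1)=0$, $\omega(E,\bar E)=0$, and that the $2\times 2$ Gram matrix of $\omega|_E$ is invertible precisely under the geometric-multiplicity-one hypothesis, with entries $\langle\eta_1,J_4\eta_2\rangle$, $\langle\eta_2,J_4\eta_1\rangle$, and $\langle\eta_2,J_4\eta_2\rangle$. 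Substituting $\gamma'(0)=J_4 A(0)\gamma(0)$, using the crucial identity $J_4\cdot J_4=-\Id_4$ to convert each $\omega(f_j,J_4 A(0)v)$ into $-f_j^*A(0)v$, and simplifying the resulting algebraic combinations yields both $\lambda_0\,s'(0)-\pi'(0)=\lambda_0^2\,\langle A(0)\eta_1,\eta_1\rangle/\langle\eta_2,J_4\eta_1\rangle$ and that $s'(0)/\lambda_0$ equals the four-term bracketed expression in the statement.

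Third, the two bifurcated eigenvalues solve the monic quadratic $x^2-s(t)x+\pi(t)=0$, so
$$
\lambda_{1,2}(t)=\frac{s(t)\pm\sqrt{s(t)^2-4\pi(t)}}{2}=\lambda_0+\frac{s'(0)}{2}\,t\pm\sqrt{(\lambda_0\, s'(0)-\pi'(0))\,t}+o(t),
$$
which is precisely \eqref{eq: lambdajtsecondexpansion}. For the consequence \eqref{eq: real part sum of bifurcated eigenvalues}, note that $\langle\eta_2,J_4\eta_2\rangle$ is purely imaginary (anti-Hermiticity of $J_4$) and $\langle A(0)\eta_2,\eta_1\rangle=\overline{\langle A(0)\eta_1,\eta_2\rangle}$ (self-adjointness of $A(0)$); combined with $\langle\eta_2,J_4\eta_1\rangle=-\langle\eta_1,J_4\eta_2\rangle$, this forces three of the four bracketed terms to be purely imaginary, leaving only $\langle A(0)\eta_1,\eta_1\rangle/\langle\eta_2,J_4\eta_1\rangle$ when one takes $\mathrm{Re}(\bar\lambda_0\,\cdot)$.

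The main obstacle is the algebraic bookkeeping in the second step: carefully tracking Hermitian-versus-bilinear conjugations when inverting the Gram matrix and reducing everything to the hypothesis pairings. A minor routine check is that the Schur-complement coupling between the $E$-block and the $\bar E$-block, whose off-diagonal entries are $O(t)$ while the spectral gap $|\lambda_0-\bar\lambda_0|$ is bounded away from zero, contributes to the effective $2\times 2$ matrix only at order $t^2$, and hence to the eigenvalues only at order $t^{3/2}=o(t)$, so it does not affect the linear-in-$t$ coefficient being computed.
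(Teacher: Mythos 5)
Your route is genuinely different from the paper's and, up to one regularity point discussed below, it is correct. The paper expands the characteristic polynomial $\det(\lambda\cdot\Id_4-\gamma(t))$ at $\lambda_0$, computes the coefficients $c_{k_1,k_2}$ via exterior powers, and extracts the two branches through the substitution $\lambda=\lambda_0+at^{1/2}+\mu t$ combined with a root-continuity lemma; you instead package the bifurcating pair through the Riesz projection $P(t)$, note that $s(t)=\mathrm{tr}(\gamma(t)P(t))$ and $\pi(t)$ are smooth, differentiate them using $\mathrm{tr}(XP'(0))=0$ for $[X,P(0)]=0$, and recover the branches from $x^2-s(t)x+\pi(t)=0$. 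Your trace formulas check out against the paper's wedge-product computations: writing $J_4A(0)\eta_1=\sum_k\alpha_k\eta_k$ and $J_4A(0)\eta_2=\sum_k\beta_k\eta_k$ and extracting coefficients with the vanishing pairings of Lemma~\ref{lem: eta1eta2 Krein form}, one gets $s'(0)=\lambda_0(\alpha_1+\alpha_2+\beta_2)$, which is $\lambda_0$ times the bracket in \eqref{eq: lambdajtsecondexpansion}, and $\lambda_0 s'(0)-\pi'(0)=\lambda_0^2\alpha_2=\lambda_0^2\langle A(0)\eta_1,\eta_1\rangle/\langle\eta_2,J_4\eta_1\rangle$, matching the paper's $c_{3,1}(0)/c_{2,0}$. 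Your reality argument for \eqref{eq: real part sum of bifurcated eigenvalues} is the paper's, with the small imprecision that it is the \emph{sum} of the second and third bracket terms (not each separately) that is purely imaginary. What your approach buys: the differentiability of $\lambda_1+\lambda_2$ is immediate because the sum is literally the smooth scalar $s(t)$, and no exterior-algebra bookkeeping or continuity-of-roots lemma is needed; the final remark about a Schur complement is superfluous in the $P(t)$ formulation. The paper's approach, in turn, works directly on the characteristic polynomial and is what makes the stated generalization to $\mathbb{R}^{2n}$ via \cite[Lemma~B.1]{KLC1} routine.

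The one step you must justify more carefully is the $o(t)$ error in \eqref{eq: lambdajtsecondexpansion}. Set $D(t)=s(t)^2-4\pi(t)$, so $D(0)=0$ and $D'(0)=4(\lambda_0 s'(0)-\pi'(0))\neq 0$. First-order information alone gives only $D(t)=D'(0)t+o(t)$, hence $\sqrt{D(t)}=\sqrt{D'(0)t}+o(t^{1/2})$, i.e.\ an error $o(\sqrt t)$ in the square-root term rather than the claimed $o(t)$; your displayed expansion of $\lambda_{1,2}(t)$ does not follow from $s'(0)$ and $\pi'(0)$ alone. To reach $o(t)$ you need $D(t)=D'(0)t+O(t^{3/2})$ or better. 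This is available here: since $A$ is $C^1$, $\gamma$ is $C^2$, hence $P$, $s$, $\pi$, $D$ are $C^2$, giving $D(t)=D'(0)t+O(t^{2})$ and $\sqrt{D(t)}=\sqrt{D'(0)t}+O(t^{3/2})=\sqrt{D'(0)t}+o(t)$. This is precisely where the paper invokes the $C^2$ regularity of $\gamma$ (its $O(t^2)$ bounds in the $c_k(t)$ expansion); once you add that sentence, your argument closes.
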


\begin{theorem}\label{thm: sum of eig epsilon}
 Consider the solution of \eqref{eq: perturbed ham system R}. Suppose $\lambda_0=e^{\sqrt{-1}\theta_0}\in U$ is an eigenvalue of $\gamma(T,0)$. Suppose that $\lambda_0\neq\pm 1$. Assume that
\begin{equation}
 \gamma(T,0)(\eta_1,\eta_2,\bar{\eta}_1,\bar{\eta}_2)=(\eta_1,\eta_2,\bar{\eta}_1,\bar{\eta}_2)\begin{bmatrix}
 \lambda_0 & \lambda_0 & 0 & 0\\
 0 & \lambda_0 & 0 & 0\\
 0 & 0 & \bar{\lambda}_0 & \bar{\lambda}_0\\
 0 & 0 & 0 & \bar{\lambda}_0
 \end{bmatrix}.
\end{equation}
Define $\eta_1(t)=\gamma(t,0)\eta_1$ and $\eta_2(t)=\gamma(t,0)\eta_2$
and that $\int_{0}^{T}\langle \frac{\partial}{\partial\varepsilon}A(t,0)\eta_1(t),\eta_1(t)\rangle\,\mathrm{d}t\neq 0$. Let $\varepsilon\geq 0$. Denote by $\lambda_1(\varepsilon)$ and $\lambda_2(\varepsilon)$ the eigenvalues of $\gamma(T,\varepsilon)$ bifurcated from $\lambda_0$ as $\varepsilon$ varies. Then, for $j=1,2$, we have that
 \begin{align}\label{eq: lambdajvarepsilonsecondexpansion}
  \lambda_j(\varepsilon)=&\lambda_0+(-1)^{j}\lambda_0\cdot\sqrt{\int_{0}^{T}\frac{\langle \frac{\partial}{\partial\varepsilon}A(t,0)\eta_1(t),\eta_1(t)\rangle}{\langle\eta_2,J_4\eta_1\rangle}\,\mathrm{d}t\cdot\varepsilon}\notag\\
  &+
  \frac{\lambda_0}{2}\left(-\int_{0}^{T}\frac{\langle \frac{\partial}{\partial\varepsilon}A(t,0)\eta_1(t),\eta_1(t)\rangle}{\langle\eta_2,J_4\eta_1\rangle}\,\mathrm{d}t+\int_{0}^{T}\frac{\langle \frac{\partial}{\partial\varepsilon}A(t,0)\eta_1(t),\eta_2(t)\rangle}{\langle\eta_1,J_{4}\eta_2\rangle}\,\mathrm{d}t\right.\notag\\
  &\left.+\int_{0}^{T}\frac{\langle \frac{\partial}{\partial\varepsilon}A(t,0)\eta_2(t),\eta_1(t)\rangle}{\langle\eta_2,J_{4}\eta_1\rangle}\,\mathrm{d}t-\int_{0}^{T}\frac{\langle \frac{\partial}{\partial\varepsilon}A(t,0)\eta_1(t),\eta_1(t)\rangle\langle\eta_2,J_4\eta_2\rangle}{\langle\eta_2,J_{4}\eta_1\rangle\langle\eta_1,J_{4}\eta_2\rangle}\,\mathrm{d}t\right)\varepsilon+o(\varepsilon).
 \end{align}
 Consequently, we have that
 \begin{equation}
  \mathrm{Re}\left(\bar{\lambda}_0\frac{\mathrm{d}}{\mathrm{d}\varepsilon}(\lambda_1(\varepsilon)+\lambda_2(\varepsilon))|_{\varepsilon=0}\right)=\int_{0}^{T}\frac{\langle \frac{\partial}{\partial\varepsilon}A(t,0)\eta_1(t),\eta_1(t)\rangle}{\langle\eta_2,J_4\eta_1\rangle}\,\mathrm{d}t.
 \end{equation}
\end{theorem}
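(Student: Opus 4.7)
The plan is to deduce Theorem~\ref{thm: sum of eig epsilon} from Theorem~\ref{thm: sum of eig t} by regarding $\varepsilon\mapsto\gamma(T,\varepsilon)$ as a curve of symplectic matrices parametrized by $\varepsilon$. By \eqref{eq: equation for partial partial gamma T epsilon}--\eqref{defn: BTepsilon}, this curve satisfies
\begin{equation*}
\frac{\partial}{\partial\varepsilon}\gamma(T,\varepsilon)=J_{4}B(T,\varepsilon)\gamma(T,\varepsilon),
\end{equation*}
which is exactly of the form \eqref{eq: ham system R modified initial cont pos} with ``time'' $\varepsilon$ and symmetric coefficient $B(T,\varepsilon)$. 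Since the Jordan structure hypothesized on $\gamma(T,0)$ coincides with the one in Theorem~\ref{thm: sum of eig t}, that theorem applies verbatim after the substitutions $\gamma(0)\to\gamma(T,0)$, $A(0)\to B(T,0)$, and $t\to\varepsilon$. The nondegeneracy condition $\langle B(T,0)\eta_{1},\eta_{1}\rangle\neq 0$ required by Theorem~\ref{thm: sum of eig t} will turn out to agree with the assumption $\int_{0}^{T}\langle\tfrac{\partial}{\partial\varepsilon}A(t,0)\eta_{1}(t),\eta_{1}(t)\rangle\,\mathrm{d}t\neq 0$ of the present theorem. This reduces the problem to computing the four inner products $\langle B(T,0)\eta_{i},\eta_{j}\rangle$, $i,j\in\{1,2\}$.

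For this, \eqref{defn: BTepsilon} gives
\begin{equation*}
\langle B(T,0)\eta_{i},\eta_{j}\rangle=\int_{0}^{T}\langle\tfrac{\partial}{\partial\varepsilon}A(t,0)\,\tilde{\eta}_{i}(t),\tilde{\eta}_{j}(t)\rangle\,\mathrm{d}t,\qquad \tilde{\eta}_{k}(t)\overset{\text{def}}{=}\gamma(t,0)\gamma(T,0)^{-1}\eta_{k}.
\end{equation*}
Inverting the Jordan relations $\gamma(T,0)\eta_{1}=\lambda_{0}\eta_{1}$ and $\gamma(T,0)\eta_{2}=\lambda_{0}(\eta_{1}+\eta_{2})$ yields $\gamma(T,0)^{-1}\eta_{1}=\lambda_{0}^{-1}\eta_{1}$ and $\gamma(T,0)^{-1}\eta_{2}=\lambda_{0}^{-1}(\eta_{2}-\eta_{1})$, so $\tilde{\eta}_{1}(t)=\lambda_{0}^{-1}\eta_{1}(t)$ and $\tilde{\eta}_{2}(t)=\lambda_{0}^{-1}(\eta_{2}(t)-\eta_{1}(t))$. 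As $|\lambda_{0}|=1$, the scalar prefactors cancel in the (Hermitian) inner product, whereupon each $\langle B(T,0)\eta_{i},\eta_{j}\rangle$ becomes an explicit linear combination of the four integrals $I_{kl}\overset{\text{def}}{=}\int_{0}^{T}\langle\tfrac{\partial}{\partial\varepsilon}A(t,0)\eta_{k}(t),\eta_{l}(t)\rangle\,\mathrm{d}t$.

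Substituting these four combinations into the expansion furnished by Theorem~\ref{thm: sum of eig t} and collecting terms will produce \eqref{eq: lambdajvarepsilonsecondexpansion}. The identity for $\mathrm{Re}(\bar{\lambda}_{0}\frac{\mathrm{d}}{\mathrm{d}\varepsilon}(\lambda_{1}(\varepsilon)+\lambda_{2}(\varepsilon))|_{\varepsilon=0})$ then follows immediately: the $\sqrt{\varepsilon}$ contributions in $\lambda_{1}$ and $\lambda_{2}$ are opposite in sign and cancel, so only the linear-in-$\varepsilon$ term survives, and its coefficient, after multiplication by $\bar{\lambda}_{0}$ and taking the real part, reduces to the claimed single integral.

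I expect the main obstacle to be the bookkeeping in the substitution step: the correction term $-\eta_{1}(t)$ inside $\tilde{\eta}_{2}(t)$ generates several cross-terms which must rearrange, using the Krein-type identity $\langle\eta_{1},J_{4}\eta_{2}\rangle=-\langle\eta_{2},J_{4}\eta_{1}\rangle$ (a consequence of the Jordan structure and $|\lambda_{0}|=1$, via $\omega(\gamma\bar\eta_{2},\gamma\eta_{2})=\omega(\bar\eta_{2},\eta_{2})$), into precisely the asymmetric combination of the $I_{kl}$'s appearing in \eqref{eq: lambdajvarepsilonsecondexpansion}. Beyond this purely algebraic simplification, no further analytic input beyond Theorem~\ref{thm: sum of eig t} is required.
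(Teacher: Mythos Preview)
Your proposal is correct and follows essentially the same route as the paper: the paper's proof of Theorem~\ref{thm: sum of eig epsilon} also applies Theorem~\ref{thm: sum of eig t} to the curve $\varepsilon\mapsto\gamma(T,\varepsilon)$ via \eqref{eq: equation for partial partial gamma T epsilon}, computes the three inner products $\langle B(T,0)\eta_1,\eta_1\rangle$, $\langle B(T,0)\eta_1,\eta_2\rangle$, $\langle B(T,0)\eta_2,\eta_1\rangle$ exactly as you describe (using $\gamma(T,0)^{-1}\eta_1=\bar\lambda_0\eta_1$, $\gamma(T,0)^{-1}\eta_2=\bar\lambda_0(\eta_2-\eta_1)$), and then substitutes. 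Your anticipated use of $\langle\eta_1,J_4\eta_2\rangle=-\langle\eta_2,J_4\eta_1\rangle$ to tidy the cross-terms is likewise the only algebra needed; the paper leaves this final simplification implicit.
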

Our study on the second order asymptotics of bifurcated Krein multipliers and the derivative of the sum of bifurcated Krein multipliers was motivated by \cite{KuwamuraYanagidaMR2271499} of Kuwamura and Yanagida. For perturbed linear Hamiltonian equations as \eqref{eq: perturbed ham system R} but with a general form of $J_4$, they point out that the sum of the bifurcated eigenvalues is differentiable, though neither of them is differentiable. However, their expression (1.7) of the derivative of the sum of the eigenvalues seems to be incorrect.

Note that \eqref{eq: real part sum of bifurcated eigenvalues} is useful for studying the strong stability of $\gamma(t)$. See Remark~\ref{rem: determine the position of eigenvalues}.

By using \cite[Lemma~B.1]{KLC1}, Theorem~\ref{thm: sum of eig t} and \ref{thm: sum of eig epsilon} could be generalized to the equations on $\mathbb{R}^{2n}$ if the algebraic multiplicity of $\lambda_0$ is two and the geometric multiplicity is one.

\bigskip

The organization of this paper is as follows. We collect definitions and notations, prepare some useful properties in Section~\ref{sect: preliminaries}. We prove Theorem~\ref{thm: sum of eig t} in Section~\ref{sect: proof of sum of eig t} and Theorem~\ref{thm: sum of eig epsilon} in Section~\ref{sect: proof of sum of eig epsilon}.

\section{Preliminaries}\label{sect: preliminaries}
\subsection{Notations and definitions}\label{subsect: notation}
\begin{itemize}
 \item For a matrix $M$, we denote by $M^{T}$ the transpose of $M$. For a complex matrix $M$, we denote by $M^{*}$ the conjugate transpose of $M$.
 \item For $n\geq 1$, we denote by $\Id_n$ the $n\times n$ identity matrix and define $J_{2n}\overset{\text{def}}{=}\begin{bmatrix}0 & \Id_n\\-\Id_n & 0\end{bmatrix}$. Then, $J_{2n}^{*}=J_{2n}^{T}=-J_{2n}$ and $J_{2n}^{2}=-\Id_n$.
 \item For vectors $v_1,\ldots,v_n$ in a vector space $V$, we denote by $\wedge_{j=1}^{n}v_j$ the exterior product $v_1\wedge v_2\wedge\cdots\wedge v_n$. (Note that $\wedge$ is associative.) We denote by $\Lambda^{n}(V)$ the space of the linear span of all such $\wedge_{j=1}^{n}v_j$.
 \item For $m\geq 1$, the inner product $\langle\cdot,\cdot\rangle$ on $\mathbb{C}^{m}$ is defined by \[\langle x,y\rangle=\sum_{j=1}^{m}x_j\bar{y}_{j}.\]
 \item Denote by $p(\lambda,t)$ the characteristic polynomial of the matrix $\gamma(t)$, i.e.,
 \[p(\lambda,t)=\det(\lambda\cdot\Id-\gamma(t)).\]
\end{itemize}

\subsection{Exterior powers of linear maps}\label{subsect: exterior powers of linear maps}
We recall exterior powers of a linear map $A$ and its relation with its determinant $\det(A)$.

\begin{defn}\label{defn: ext powers of linear maps}
 Let $A:V\to V$ be a linear map on an $n$-dimensional vector space $V$. For $k=0,\ldots,n$, we define the exterior powers $\bigwedge(n,k,A):\Lambda^{n}(V)\to \Lambda^{n}(V)$ as a linear map as follows:
 \[\bigwedge(n,k,A)(v_1\wedge\cdots \wedge v_n)\overset{\mathrm{def}}{=}\sum_{\sigma\in\{0,1\}^{n}:\sum_{i}\sigma_i=k}\wedge^{n}_{i=1}(\sigma_i\cdot Av_i+(1-\sigma_i)\cdot v_i).\]

 Similarly, for two linear maps $A_1,A_2:V\to V$, for two integers $k_1,k_2=0,\ldots,n$, we define the linear map $\bigwedge(n,k_1,k_2,A_1,A_2):\Lambda^{n}(V)\to \Lambda^{n}(V)$ as follows:
\begin{multline}
 \bigwedge(n,k_1,k_2,A_1,A_2)(v_1\wedge\cdots \wedge v_n)\\
 \overset{\mathrm{def}}{=}\sum_{\sigma\in\{0,1,2\}^{n}:\sum_{i}1_{\sigma_i=1}=k_1,\sum_{i}1_{\sigma_i=2}=k_2}\wedge^{n}_{i=1}(1_{\sigma_i=1}\cdot A_1v_i+1_{\sigma_i=2}\cdot A_2v_i+1_{\sigma_i=0}\cdot v_i),
\end{multline}
 Since $\Lambda^{n}(V)$ is $1$-dimensional, we identify the $\bigwedge(n,k,A)$ (or $\bigwedge(n,k_1,k_2,A_1,A_2)$) with the unique scaling factor, which is also denoted by $\bigwedge(n,k,A)$ (or $\bigwedge(n,k_1,k_2,A_1,A_2)$).
\end{defn}

In the above definition, for each vector $v_i$, we choose one from the three linear maps $\Id$, $A_1$ and $A_2$ and apply it to $v_i$. For the assignment of linear maps to the linear basis, the only constraint is that the map $A_1$ occurs $k_1$ many times and the map $A_2$ occurs exactly $k_2$ many times. All these assignments have equal weight.

Note that $\det(A)$ is identified with the linear map $\bigwedge(n,n,A)$ on the $1$-dimensional vector space $\Lambda^{n}(V)$. In particular, for an eigenvalue $\lambda_0$ of the $2n\times 2n$ matrix $\gamma(0)$, we have that
\begin{multline}\label{eq: char polynomial three terms}
 p(\lambda,t)=\det(\lambda\cdot\Id_{2n}-\gamma(t))=\det((\lambda-\lambda_0)\cdot\Id_{2n}+(\lambda_0\cdot\Id_{2n}-\gamma(0))-(\gamma(t)-\gamma(0)))\\
 =\sum_{k=0}^{2n}(\lambda-\lambda_0)^{k}\sum_{k_1+k_2=2n-k,k_1\geq 0,k_2\geq 0}(-1)^{k_2}\cdot\bigwedge(2n,k_1,k_2,\lambda_0\cdot\Id_{2n}-\gamma(0),\gamma(t)-\gamma(0)).
\end{multline}
In the above calculation, we express the determinant by wedge powers of the sum of linear maps $(\lambda-\lambda_0)\cdot\Id_{2n}$, $\lambda_0\cdot\Id_{2n}-\gamma(0)$ and $\gamma(0)-\gamma(t)$, expand it according to distributive law and collect the terms with the same times of occurrence, where $k_1$ is the time of occurrence of $\lambda_0\cdot\Id_{2n}-\gamma(0)$ and $k_2$ is the time of occurrence of $\gamma(0)-\gamma(t)$.

\subsection{Continuity of roots of polynomials}
We will need the following lemma on the continuity of the roots of polynomials as the coefficients vary.
\begin{lemma}\label{lem: continuity of roots of general polynomials}\cite[Lemma~2.1]{KLC1}
 Let $W$ be a neighborhood of $0$. Let $P_{t}(z)=\sum_{j=0}^{n}c_{j}(t)z^{j}$, where $c_{j}(t)\in\mathbb{C}$ and $t\in W$. Suppose that $t\mapsto c_j(t)$ is continuous for $j=0,\ldots,n$ and $t\in W$. Denote by $d(t)$ the degree of the polynomial $P_{t}$. Suppose that $d(t)=n$ for $t\in W\setminus\{0\}$ and $d(0)=m\leq n$. Then, there exist $m$ continuous complex valued functions $z_1,\ldots,z_{m}$ on $W$ and $n-m$ continuous complex valued functions $z_{m+1},\ldots,z_{n}$ on $W\setminus\{0\}$ such that
 \begin{itemize}
  \item for $t\neq 0$, $z_1(t),\ldots,z_n(t)$ are roots of $P_{t}$,
  \item for $t=0$, $z_1(0),\ldots,z_m(0)$ are roots of $P_0$,
  \item for $i=m+1,\ldots,n$, we have that $\lim_{t\to 0}z_i(t)=\infty$.
 \end{itemize}
\end{lemma}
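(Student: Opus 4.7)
The plan is to split the $n$ roots of $P_t$ (for $t\neq 0$) into $m$ roots that stay bounded and converge to the roots of $P_0$, and $n-m$ roots that escape to infinity as $t\to 0$, and then to extract continuous single-valued branches on the appropriate domains.

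First I would locate the bounded roots via the argument principle. Let $\zeta_1,\ldots,\zeta_s$ be the distinct roots of $P_0$ with multiplicities $m_1,\ldots,m_s$ summing to $m$, and choose disjoint closed disks $\overline{D_i}$ around $\zeta_i$ on whose boundaries $P_0$ does not vanish. Since each $c_j(t)$ is continuous and only finitely many monomials occur, $P_t\to P_0$ uniformly on each compact $\partial D_i$, so after shrinking $W$ we may assume $P_t$ has no zero on any $\partial D_i$ for $t\in W$. The argument principle then gives
\[
\#\{\text{zeros of }P_t\text{ in }D_i\}=\frac{1}{2\pi\sqrt{-1}}\oint_{\partial D_i}\frac{P_t'(z)}{P_t(z)}\,\mathrm{d}z
\]
as a continuous integer-valued, hence constant, function of $t\in W$; it therefore equals $m_i$ throughout. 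Summing, $P_t$ has exactly $m$ roots in $\bigcup_i D_i$ for every $t\in W$.

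Next I would show that the other $n-m$ roots (present only for $t\neq 0$) escape to infinity. Fix $R$ large enough that $\bigcup_i\overline{D_i}\subset\{|z|<R\}$ and $P_0$ does not vanish on $|z|=R$; the same Rouché/argument-principle reasoning on $|z|=R$ gives exactly $m$ roots of $P_t$ in $|z|\leq R$ for $t$ near $0$, so the remaining $n-m$ lie in $|z|>R$. If one of them failed to tend to $\infty$, a sequence $z(t_k)$ with $t_k\to 0$ and $R<|z(t_k)|\leq R'$ would have a subsequential limit $z^{*}$ with $|z^{*}|\geq R$ satisfying $P_0(z^{*})=\lim P_{t_k}(z(t_k))=0$, contradicting the choice of $R$.

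Finally, to upgrade multiset continuity to continuous single-valued branches, I would shrink $W$ further and invoke the classical continuous-selection result for roots of a one-parameter family of polynomials, labelling the $m$ bounded branches as $z_1,\ldots,z_m$ and the $n-m$ unbounded ones as $z_{m+1},\ldots,z_n$. For the unbounded branches, it is natural to pass to the reciprocal polynomial $w\mapsto w^n P_t(1/w)$: its constant term is $c_n(t)$, which vanishes to the order $n-m$ at $t=0$, so the ``escape to infinity'' of the $z_i$ with $i>m$ translates into a zero of the reciprocal at $w=0$, and the same selection argument produces continuous branches $1/z_i(t)$ on $W\setminus\{0\}$ tending to $0$. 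The main obstacle will be exactly this last step: at $t=0$ several roots may coalesce, so a continuous single-valued labelling is not automatic from the Rouché count alone, and I would rely on the standard continuous-selection theorem for polynomial families (alternatively, a direct Puiseux-type normal-form argument) to handle the coalescence.
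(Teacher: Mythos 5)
The paper itself does not prove this lemma: it is quoted verbatim from \cite[Lemma~2.1]{KLC1}, so there is no in-house argument to compare against. Your proof is the standard one and is essentially sound: the argument-principle/Rouch\'e count around each root of $P_0$ and around a large circle $|z|=R$, the compactness argument showing the remaining $n-m$ roots escape to infinity uniformly, and the reduction of the unbounded branches to the reciprocal polynomial are exactly the expected ingredients, and together they do prove the multiset version of the statement.

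Two points deserve to be made explicit. First, the final step, which you correctly flag as the real issue, rests on the continuous-selection theorem for roots of a \emph{one-real-parameter} family (e.g. Kato, Perturbation Theory, Thm.~II.5.2); this is where the hypothesis that $W$ is a neighborhood of $0$ in $\mathbb{R}$ is essential, since for a complex parameter the single-valued selection claim is false already for $P_t(z)=z^2-t$. You should also say how the selection is glued at $t=0$: after normalizing $P_t/c_n(t)$ to be monic, apply the selection theorem separately on the two intervals of $W\setminus\{0\}$; for small $|t|$ every root lies in the disjoint union $\bigcup_i D_i\cup\{|z|>R'\}$, so by connectedness each branch stays in one fixed $D_i$ or in the unbounded region, the Rouch\'e count gives exactly $m_i$ bounded branches per disk on each side of $0$, and these can be paired across $t=0$ and extended by $z_j(0)=\zeta_i$ (shrinking the disks shows the limit is indeed $\zeta_i$). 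Second, a small slip in the side remark: the constant term $c_n(t)$ of $w^nP_t(1/w)$ merely vanishes at $t=0$ (no order is available under mere continuity); the correct statement is that at $t=0$ the reciprocal polynomial $\sum_j c_j(0)w^{n-j}$ has a zero of multiplicity exactly $n-m$ at $w=0$ because $c_m(0)\neq 0$, which is what encodes the escape of the $n-m$ branches. With these clarifications your argument is complete and matches the standard proof of the cited lemma.
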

%

\section{Proof of Theorem~\ref{thm: sum of eig t}}\label{sect: proof of sum of eig t}

We assume $t\geq 0$ in the following proof. The proof for $t\leq 0$ is similar and we left it to the reader. Recall Definition~\ref{defn: ext powers of linear maps} and \eqref{eq: char polynomial three terms}. We expand the characteristic polynomial $p(\lambda,t)=\det(\lambda\cdot\Id_{4}-\gamma(t))$ at $\lambda_0=e^{\sqrt{-1}\theta_0}$:
\begin{equation}\label{defn: ck expansion at lambda0}
 p(\lambda,t)=\sum_{k=0}^{4}c_{k}(t)(\lambda-\lambda_0)^k.
\end{equation}
Since $p(\lambda,t)$ is a monic polynomial in $\lambda$, we have that $c_4(t)\equiv 1$. For $k=0,\ldots,4$, \[c_k(t)=\sum_{k_2=0}^{4-k}(-t)^{k_2}c_{4-k-k_2,k_2}(t),\]
where
\[c_{k_1,k_2}(t)=\bigwedge(4,k_1,k_2,\lambda_0\cdot\Id-\gamma(0),\frac{1}{t}(\gamma(t)-\gamma(0))).\]
Note that $c_{k,0}(t)$ doesn't depend on $t$ for $k=0,1,\ldots,4$. For simplicity, we denote it by $c_{k,0}$. Since $t\mapsto\gamma(t)$ is $C^2$, we have that $\frac{1}{t}(\gamma(t)-\gamma(0))=\dot{\gamma}(0)+O(t)$ as $t\to 0$. Hence, for $k=0,\ldots,3$,
\begin{equation}\label{eq: ckt expansion Ot2}
 c_{k}(t)=c_{4-k,0}-tc_{3-k,1}(0)+O(t^2).
\end{equation}
In particular, $c_k(0)=c_{4-k,0}$. Since the four eigenvalues of $\gamma(0)$ are $\lambda_0$, $\lambda_0$, $\bar{\lambda_0}$, $\bar{\lambda}_0$, $p(\lambda,0)=\det(\lambda\cdot\Id_4-\gamma(0))=(\lambda-\lambda_0)^2(\lambda-\bar{\lambda}_0)^2=(\lambda-\lambda_0)^4+2(\lambda_0-\bar{\lambda}_0)(\lambda-\lambda_0)^3+(\lambda_0-\bar{\lambda}_0)^2(\lambda-\lambda_0)^2$. Hence,
\begin{equation}\label{eq: c40c30}
 c_{4,0}=c_0(0)=0,\quad c_{3,0}=c_1(0)=0,
\end{equation}
\begin{equation}\label{eq: c20}
 c_{2,0}=c_2(0)=(\lambda_0-\bar{\lambda}_0)^2,
\end{equation}
\begin{equation}\label{eq: c10}
 c_{1,0}=c_3(0)=2(\lambda_0-\bar{\lambda}_0).
\end{equation}

Define $q(\mu,t)=p(\lambda_0+at^{1/2}+\mu t,t)=\sum_{k=0}^4c_k(t)(at^{\frac{1}{2}}+\mu t)^{k}$, where $a$ is chosen such that
\begin{equation}\label{eq: defn of a}
 c_{2,0}a^2=c_{3,1}(0).
\end{equation}
Later, we will see that $a\neq 0$ when $\langle A(0)\xi_1,\xi_1\rangle\neq 0$, see \eqref{eq: a2} below. By \eqref{eq: ckt expansion Ot2} and \eqref{eq: c40c30}, uniformly for $|\mu|\leq 2$, we have that
\[c_4(t)(at^{\frac{1}{2}}+\mu t)^4=O(t^2),\]
\[c_3(t)(at^{\frac{1}{2}}+\mu t)^3=(c_{1,0}+O(t))(a^3t^{\frac{3}{2}}+O(t^2))=c_{1,0}a^3t^{\frac{3}{2}}+O(t^2),\]
\[c_2(t)(at^{\frac{1}{2}}+\mu t)^2=(c_{2,0}+O(t))(a^2t+2a\mu t^{\frac{3}{2}}+O(t^2))=c_{2,0}a^2t+2c_{2,0}a\mu t^{\frac{3}{2}}+O(t^2),\]
\[c_1(t)(at^{\frac{1}{2}}+\mu t)=(-tc_{2,1}(0)+O(t^2))(at^{\frac{1}{2}}+O(t))=-c_{2,1}(0)at^{\frac{3}{2}}+O(t^2),\]
\[c_0(t)=-tc_{3,1}(0)+O(t^2).\]
Hence, $q(\mu,t)=at^{\frac{3}{2}}(2c_{2,0}\mu+c_{1,0}a^2-c_{2,1}(0))+O(t^2)$. Define $r(\mu,t)=\frac{q(\mu,t)}{at^{\frac{3}{2}}}$. Then, $r$ is a polynomial in $\mu$ of degree $4$ for $t>0$. Define $r(\mu,0)=\lim_{t\downarrow 0}r(\mu,t)=2c_{2,0}\mu+c_{1,0}a^2-c_{2,1}(0)$. Then, by Lemma~\ref{lem: continuity of roots of general polynomials}, there exists a unique solution $\mu(t)$ of $r(\mu,t)=0$ such that $\lim_{t\to 0}\mu(t)=\frac{c_{2,1}(0)-c_{1,0}a^2}{2c_{2,0}}$. Thus, among the eigenvalues of $\gamma(t)$, there exists a unique $\lambda_1(t)$ such that for $t\geq 0$, \begin{equation}\label{eq: lambda1t first step}
 \lambda_1(t)=\lambda_0+at^{\frac{1}{2}}+\frac{c_{2,1}(0)-c_{1,0}a^2}{2c_{2,0}}t+o(t).
\end{equation}
Similarly, there exists a unique eigenvalue $\lambda_2(t)$ such that
\begin{equation}\label{eq: lambda2t first step}
 \lambda_2(t)=\lambda_0-at^{\frac{1}{2}}+\frac{c_{2,1}(0)-c_{1,0}a^2}{2c_{2,0}}t+o(t).
\end{equation}

To obtain \eqref{eq: lambdajtsecondexpansion}, it remains to express the above quantities via $\lambda_0$, the generalized eigenvectors and $A(0)$. Assume that
\begin{equation}
 \gamma(0)(\eta_1,\eta_2,\bar{\eta}_1,\bar{\eta}_2)=(\eta_1,\eta_2,\bar{\eta}_1,\bar{\eta}_2)\begin{bmatrix}
 \lambda_0 & \lambda_0 & 0 & 0\\
 0 & \lambda_0 & 0 & 0\\
 0 & 0 & \bar{\lambda}_0 & \bar{\lambda}_0\\
 0 & 0 & 0 & \bar{\lambda}_0
 \end{bmatrix}.
\end{equation}
Then, we have
\begin{lemma}\label{lem: eta1eta2 Krein form}
$\langle\eta_1,J_4\eta_1\rangle=\langle\eta_1,J_4\bar{\eta}_1\rangle=\langle\eta_1,J_4\bar{\eta}_2\rangle=\langle\eta_2,J_4\bar{\eta}_1\rangle=\langle\eta_2,J_4\bar{\eta}_2\rangle=\langle\bar{\eta}_1,J_4\bar{\eta}_1\rangle=0$.
\end{lemma}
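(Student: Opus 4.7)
The plan is to reduce all six vanishings to the invariance of a single complex bilinear form under the symplectic matrix $\gamma(0)$, and then to exploit the Jordan chain relations.

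First, since $J_4$ is real, for any vectors $x,y \in \mathbb{C}^4$ one has $\langle x, J_4 y\rangle = x^T J_4 \bar y$. So I introduce the complex bilinear form $\omega(v,w) \overset{\text{def}}{=} v^T J_4 w$, which is antisymmetric (because $J_4^T = -J_4$) and, by the symplectic identity $\gamma(0)^T J_4 \gamma(0) = J_4$, satisfies the invariance $\omega(\gamma(0)v,\gamma(0)w) = \omega(v,w)$. Translating the six quantities gives $\langle \eta_1,J_4\eta_1\rangle = \omega(\eta_1,\bar\eta_1)$, $\langle \eta_1,J_4\bar\eta_1\rangle = \omega(\eta_1,\eta_1)$, $\langle \eta_1,J_4\bar\eta_2\rangle = \omega(\eta_1,\eta_2)$, $\langle \eta_2,J_4\bar\eta_1\rangle = \omega(\eta_2,\eta_1)$, $\langle \eta_2,J_4\bar\eta_2\rangle = \omega(\eta_2,\eta_2)$, and $\langle \bar\eta_1,J_4\bar\eta_1\rangle = \omega(\bar\eta_1,\eta_1)$.

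Two of these, $\omega(\eta_1,\eta_1)$ and $\omega(\eta_2,\eta_2)$, vanish immediately by antisymmetry of $\omega$. For $\omega(\eta_1,\eta_2)$, I expand using the Jordan relations $\gamma(0)\eta_1 = \lambda_0\eta_1$ and $\gamma(0)\eta_2 = \lambda_0\eta_1 + \lambda_0\eta_2$: the invariance yields $\omega(\eta_1,\eta_2) = \omega(\gamma(0)\eta_1,\gamma(0)\eta_2) = \lambda_0^2\,\omega(\eta_1,\eta_1) + \lambda_0^2\,\omega(\eta_1,\eta_2) = \lambda_0^2\,\omega(\eta_1,\eta_2)$. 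Since $\lambda_0 \neq \pm 1$ we have $\lambda_0^2 \neq 1$, so $\omega(\eta_1,\eta_2)=0$; antisymmetry then gives $\omega(\eta_2,\eta_1)=0$ as well.

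The remaining two identities involve the ``mixed'' pairings $\omega(\eta_1,\bar\eta_1)$ and $\omega(\bar\eta_1,\eta_1) = -\omega(\eta_1,\bar\eta_1)$, so it suffices to show $\omega(\eta_1,\bar\eta_1)=0$. Here direct invariance is uninformative because $\lambda_0\bar\lambda_0=1$; instead I use the dual Jordan chain $\gamma(0)\bar\eta_2 = \bar\lambda_0\bar\eta_1 + \bar\lambda_0\bar\eta_2$. Applying $\omega(\eta_1,\bar\eta_2) = \omega(\gamma(0)\eta_1,\gamma(0)\bar\eta_2)$ and expanding gives $\omega(\eta_1,\bar\eta_2) = \lambda_0\bar\lambda_0[\omega(\eta_1,\bar\eta_1) + \omega(\eta_1,\bar\eta_2)] = \omega(\eta_1,\bar\eta_1) + \omega(\eta_1,\bar\eta_2)$, hence $\omega(\eta_1,\bar\eta_1)=0$.

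The whole argument is a routine symplectic-Jordan computation; there is no real obstacle beyond keeping track of the translation between the Hermitian pairing $\langle\cdot,\cdot\rangle$ and the complex-bilinear form $\omega$, and applying the invariance to the right pair (the key trick is that the mixed pairing $\omega(\eta_1,\bar\eta_1)$ is forced to vanish not by $\gamma(0)$-invariance applied to itself, but via the adjacent pairing $\omega(\eta_1,\bar\eta_2)$ through the Jordan chain on the conjugate side).
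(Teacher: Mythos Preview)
Your proof is correct. The underlying mechanism is the same as the paper's: invariance of a symplectic form under the real symplectic matrix $\gamma(0)$, combined with the Jordan relations $\gamma(0)\eta_1=\lambda_0\eta_1$, $\gamma(0)\eta_2=\lambda_0(\eta_1+\eta_2)$ and their conjugates.

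The difference is in packaging. The paper works directly with the sesquilinear Krein form $(x,y)\mapsto\langle x,J_4 y\rangle$, which is only skew-Hermitian (so $\langle x,J_4 x\rangle$ is purely imaginary, not automatically zero). As a consequence, each of the six vanishings is obtained by a separate invariance computation; in particular, $\langle\eta_1,J_4\eta_1\rangle=0$ is extracted from the invariance of $\langle\eta_2,J_4\eta_1\rangle$, and $\langle\eta_2,J_4\bar\eta_2\rangle=0$ requires feeding in all of the previously established identities. By passing instead to the complex-bilinear form $\omega(v,w)=v^T J_4 w$ and translating the six quantities accordingly, you get genuine antisymmetry, which kills $\omega(\eta_1,\eta_1)=\langle\eta_1,J_4\bar\eta_1\rangle$ and $\omega(\eta_2,\eta_2)=\langle\eta_2,J_4\bar\eta_2\rangle$ for free and reduces the remaining four to two invariance computations. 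This is a tidy economy, though not a conceptually different argument; the one place where your route and the paper's diverge in substance is that you recover $\omega(\eta_1,\bar\eta_1)=\langle\eta_1,J_4\eta_1\rangle$ via the conjugate Jordan chain applied to $\omega(\eta_1,\bar\eta_2)$, whereas the paper gets it from the $\eta$-side chain applied to $\langle\eta_2,J_4\eta_1\rangle$.
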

\begin{proof}[Proof of Lemma~\ref{lem: eta1eta2 Krein form}]
 Recall \eqref{eq: defn of eta1 eta2} and the assumption that $\lambda_0\in U$ and $\lambda_0\neq\pm 1$. Since $\gamma(0)$ is symplectic, we have that
 \begin{equation*}
  \langle\eta_2,J_4\eta_1\rangle=\langle\gamma(0)\eta_2,J_4\gamma(0)\eta_1\rangle=\langle\lambda_0\eta_2+\lambda_0\eta_1,J_4(\lambda_0\eta_1)\rangle=\langle\eta_2,J_4\eta_1\rangle+\langle\eta_1,J_4\eta_1\rangle.
 \end{equation*}
 Hence, $\langle\eta_1,J_4\eta_1\rangle=0$. Similarly, $\langle\bar{\eta}_1,J_4\bar{\eta}_1\rangle=0$.

 Since $\gamma(0)$ is symplectic, we have that
 \begin{equation*}
  \langle\eta_1,J_4\bar{\eta}_1\rangle=\langle\gamma(0)\eta_1,J_4\gamma(0)\bar{\eta}_1\rangle=\langle\lambda_0\eta_1,J_4(\bar{\lambda}_0\bar{\eta}_1)\rangle=\lambda_0^2\langle\eta_1,J_4\bar{\eta}_1\rangle.
 \end{equation*}
 Since $\lambda_0\neq\pm 1$, we have that
 \begin{equation}\label{eq: eta1 J4 bar eta1}
  \langle\eta_1,J_4\bar{\eta}_1\rangle=0.
 \end{equation}

 Since $\gamma(0)$ is symplectic and \eqref{eq: eta1 J4 bar eta1}, we have that
 \begin{equation*}
  \langle\eta_2,J_4\bar{\eta}_1\rangle=\langle\gamma(0)\eta_2,J_4\gamma(0)\bar{\eta}_1\rangle=\langle\lambda_0\eta_2+\lambda_0\eta_1,J_4(\bar{\lambda}_0\bar{\eta}_1)\rangle=\lambda_0^2\langle\eta_2,J_4\bar{\eta}_1\rangle.
 \end{equation*}
 Since $\lambda_0\neq\pm 1$, we have that
 \begin{equation}\label{eq: eta2 J4 bar eta1}
  \langle\eta_2,J_4\bar{\eta}_1\rangle=0.
 \end{equation}

 Similarly, we have that
 \begin{equation}\label{eq: eta1 J4 bar eta2}
  \langle\eta_1,J_4\bar{\eta}_2\rangle=0.
 \end{equation}

 Since $\gamma(0)$ is symplectic, we have that
 \begin{multline*}
  \langle \eta_2,J_4\bar{\eta}_2\rangle=\langle\gamma(0)\eta_2,J_4\gamma(0)\bar{\eta}_2\rangle=\langle\lambda_0\eta_2+\lambda_0\eta_1,J_4(\bar{\lambda_0}\bar{\eta}_2+\bar{\lambda}_0\bar{\eta}_1)\\
  =\lambda_0^2(\langle\eta_2,J_4\bar{\eta}_2\rangle+\langle\eta_1,J_4\bar{\eta}_1\rangle+\langle\eta_2,J_4\bar{\eta}_1\rangle+\langle\eta_1,J_4\bar{\eta}_2\rangle)=\lambda_0^2\langle\eta_2,J_4\bar{\eta}_2\rangle.
 \end{multline*}
 Since $\lambda_0\neq\pm 1$, we have that
 \begin{equation}\label{eq: eta2 J4 bar eta2}
  \langle\eta_2,J_4\bar{\eta}_2\rangle=0.
 \end{equation}
\end{proof}

We have computed $c_{1,0}$ and $c_{2,0}$ and we will compute $c_{3,1}(0)$ and $c_{2,1}(0)$. For simplicity of notation, define $K=\lambda_0\cdot\Id_4-\gamma(0)$. Then, $K\eta_1=0$, $K\eta_2=-\lambda_0\eta_1$, $K\bar{\eta}_1=(\lambda_0-\bar{\lambda}_0)\bar{\eta}_1$, $K\bar{\eta}_2=(\lambda_0-\bar{\lambda}_0)\bar{\eta}_2-\bar{\lambda}_0\bar{\eta}_1$.
Recall Definition~\ref{defn: ext powers of linear maps}. Since $K\eta_1=0$, we have that
 \begin{align}\label{eq: c310 first step}
 c_{3,1}(0)\eta_1\wedge\eta_2\wedge \bar{\eta}_1\wedge \bar{\eta}_2=&\bigwedge(4,3,1,K,\dot{\gamma}(0))\eta_1\wedge\eta_2\wedge \bar{\eta}_1\wedge \bar{\eta}_2\notag\\
 =&\bigwedge(4,3,1,K,J_4A(0)\gamma(0))\eta_1\wedge\eta_2\wedge \bar{\eta}_1\wedge \bar{\eta}_2\notag\\
 =&J_4A(0)\gamma(0)\eta_1\wedge K\eta_2\wedge K\bar{\eta}_1\wedge K\bar{\eta}_2\notag\\
 =&-\lambda_0^2(\lambda_0-\bar{\lambda_0})^2J_4A(0)\eta_1\wedge \eta_1\wedge \bar{\eta}_1\wedge \bar{\eta}_2.
 \end{align}
 Assume that $J_4A(0)\eta_1=\sum_{k=1}^{4}\alpha_k\eta_k$ where $\eta_3=\bar{\eta}_1$ and $\eta_4=\bar{\eta}_2$. By taking inner product with $J_4\eta_1$ and using Lemma~\ref{lem: eta1eta2 Krein form}, we get that
 \[\langle A(0)\eta_1,\eta_1\rangle=\sum_{k=1}^{4}\alpha_k\langle\eta_k,J_4\eta_1\rangle=\alpha_{2}\langle\eta_2,J_{4}\eta_1\rangle.\]
 Hence,
 \begin{equation}
  \alpha_2=\frac{\langle A(0)\eta_1,\eta_1\rangle}{\langle\eta_2,J_{4}\eta_1\rangle},
 \end{equation}
 \begin{equation}\label{eq: JAIdIdId}
 J_4A(0)\eta_1\wedge \eta_1\wedge \bar{\eta}_1\wedge \bar{\eta}_2=-\frac{\langle A(0)\eta_1,\eta_1\rangle}{\langle\eta_2,J_{4}\eta_1\rangle}\eta_1\wedge \eta_2\wedge \bar{\eta}_1\wedge \bar{\eta}_2
 \end{equation}
 and
 \begin{equation}\label{eq: c310}
  c_{3,1}(0)=\lambda_0^2(\lambda_0-\bar{\lambda}_0)^2\frac{\langle A(0)\eta_1,\eta_1\rangle}{\langle\eta_2,J_{4}\eta_1\rangle}.
 \end{equation}
 Hence, by \eqref{eq: c20} and \eqref{eq: c310}, we have that
 \begin{equation}\label{eq: a2}
  a^2=\frac{c_{3,1}(0)}{c_{2,0}}=\lambda_0^2\frac{\langle A(0)\eta_1,\eta_1\rangle}{\langle\eta_2,J_{4}\eta_1\rangle}.
 \end{equation}
 Similar to the deduction of \eqref{eq: c310 first step}, since $K\eta_1=0$ and $\eta_1\wedge K\eta_2=\eta_1\wedge (-\lambda_0\eta_1)=0$, we have that
 \begin{align}\label{eq: c210}
  c_{2,1}(0)\eta_1\wedge\eta_2\wedge\bar{\eta}_1\wedge\bar{\eta}_2=& J_4A(0)\gamma(0)\eta_1\wedge K\eta_2\wedge K\bar{\eta}_1\wedge \bar{\eta}_2+J_4A(0)\gamma(0)\eta_1\wedge K\eta_2\wedge \bar{\eta}_1\wedge K\bar{\eta}_2\notag\\
  &+J_4A(0)\gamma(0)\eta_1\wedge \eta_2\wedge K\bar{\eta}_1\wedge K\bar{\eta}_2+\eta_1\wedge J_4A(0)\gamma(0)\eta_2\wedge K\bar{\eta}_1\wedge K\bar{\eta}_2.
 \end{align}
 By \eqref{eq: JAIdIdId}, we have that
 \begin{align}\label{eq: JARKKId}
  J_4A(0)\gamma(0)\eta_1\wedge K\eta_2\wedge K\bar{\eta}_1\wedge \bar{\eta}_2=&-\lambda_0^2(\lambda_0-\bar{\lambda}_0)J_4A(0)\eta_1\wedge \eta_1\wedge \bar{\eta}_1\wedge \bar{\eta}_2\notag\\
  =&\lambda_0^2(\lambda_0-\bar{\lambda}_0)\frac{\langle A(0)\eta_1,\eta_1\rangle}{\langle\eta_2,J_{4}\eta_1\rangle}\eta_1\wedge \eta_2\wedge \bar{\eta}_1\wedge \bar{\eta}_2.
 \end{align}
 Similarly, we obtain that
 \begin{equation}\label{eq: JARKIdK}
 J_4A(0)\gamma(0)\eta_1\wedge K\eta_2\wedge \bar{\eta}_1\wedge K\bar{\eta}_2=\lambda_0^2(\lambda_0-\bar{\lambda}_0)\frac{\langle A(0)\eta_1,\eta_1\rangle}{\langle\eta_2,J_{4}\eta_1\rangle}\eta_1\wedge \eta_2\wedge \bar{\eta}_1\wedge \bar{\eta}_2.
 \end{equation}
 Note that
 \begin{equation*}
 J_4A(0)\gamma(0)\eta_1\wedge \eta_2\wedge K\bar{\eta}_1\wedge K\bar{\eta}_2=\lambda_0(\lambda_0-\bar{\lambda}_0)^2J_4A(0)\eta_1\wedge \eta_2\wedge \bar{\eta}_1\wedge \bar{\eta}_2.
 \end{equation*}
 Recall that $J_4A(0)\eta_1=\sum_{k=1}^{4}\alpha_k\eta_k$, where $\alpha_2=\frac{\langle A(0)\eta_1,\eta_1\rangle}{\langle\eta_2,J_{4}\eta_1\rangle}$. By taking inner product with $J_4\eta_2$, we obtain that
 \[\langle A(0)\eta_1,\eta_2\rangle=\alpha_1\langle\eta_1,J_{4}\eta_2\rangle+\alpha_2\langle\eta_2,J_4\eta_2\rangle.\]
 Hence, $\alpha_1=\frac{\langle A(0)\eta_1,\eta_2\rangle}{\langle\eta_1,J_{4}\eta_2\rangle}-\frac{\langle A(0)\eta_1,\eta_1\rangle\langle\eta_2,J_4\eta_2\rangle}{\langle\eta_2,J_{4}\eta_1\rangle\langle\eta_1,J_{4}\eta_2\rangle}$ and
 \begin{equation}\label{eq: JARIdKK}
  J_4A(0)\gamma(0)\eta_1\wedge \eta_2\wedge K\bar{\eta}_1\wedge K\bar{\eta}_2=\lambda_0(\lambda_0-\bar{\lambda}_0)^2\left(\frac{\langle A(0)\eta_1,\eta_2\rangle}{\langle\eta_1,J_{4}\eta_2\rangle}-\frac{\langle A(0)\eta_1,\eta_1\rangle\langle\eta_2,J_4\eta_2\rangle}{\langle\eta_2,J_{4}\eta_1\rangle\langle\eta_1,J_{4}\eta_2\rangle}\right)\eta_1\wedge \eta_2\wedge \bar{\eta}_1\wedge \bar{\eta}_2.
 \end{equation}
 Note that
 \begin{equation}\label{eq: IdJARIdKK1}
 \eta_1\wedge J_4A(0)\gamma(0)\eta_2\wedge K\bar{\eta}_1\wedge K\bar{\eta}_2=\lambda_0(\lambda_0-\bar{\lambda}_0)^2\eta_1\wedge J_4A(0)\eta_2\wedge \bar{\eta}_1\wedge \bar{\eta}_2+\lambda_0(\lambda_0-\bar{\lambda}_0)^2\eta_1\wedge J_4A(0)\eta_1\wedge \bar{\eta}_1\wedge \bar{\eta}_2.
 \end{equation}
 By \eqref{eq: JAIdIdId}, we have that
 \[\eta_1\wedge J_4A(0)\eta_1\wedge \bar{\eta}_1\wedge \bar{\eta}_2=-J_4A(0)\eta_1\wedge\eta_1\wedge \bar{\eta}_1\wedge \bar{\eta}_2=\frac{\langle A(0)\eta_1,\eta_1\rangle}{\langle\eta_2,J_{4}\eta_1\rangle}\eta_1\wedge \eta_1\wedge \bar{\eta}_1\wedge \bar{\eta}_2.\]
 Next, we calculate $\eta_1\wedge J_4A(0)\eta_2\wedge \bar{\eta}_1\wedge \bar{\eta}_2$: Assume $J_4A(0)\eta_2=\sum_{k=1}^{4}\beta_{k}\eta_k$. By taking inner product with $J_{4}\eta_1$, we get that
 \begin{equation*}
  \langle A(0)\eta_2,\eta_1\rangle=\beta_2\langle\eta_2,J_4\eta_1\rangle.
 \end{equation*}
 Hence, we have that $\beta_2=\frac{\langle A(0)\eta_2,\eta_1\rangle}{\langle\eta_2,J_4\eta_1\rangle}$,
 \[\eta_1\wedge J_4A(0)\eta_2\wedge \bar{\eta}_1\wedge \bar{\eta}_2=\frac{\langle A(0)\eta_2,\eta_1\rangle}{\langle\eta_2,J_4\eta_1\rangle}\eta_1\wedge \eta_2\wedge \bar{\eta}_1\wedge \bar{\eta}_2,\]
 and that
 \begin{equation}\label{eq: IdJARIdKK2}
 \eta_1\wedge J_4A(0)\gamma(0)\eta_2\wedge K\bar{\eta}_1\wedge K\bar{\eta}_2=\lambda_0(\lambda_0-\bar{\lambda}_0)^2\left(\frac{\langle A(0)\eta_2,\eta_1\rangle}{\langle\eta_2,J_4\eta_1\rangle}+\frac{\langle A(0)\eta_1,\eta_1\rangle}{\langle\eta_2,J_{4}\eta_1\rangle}\right)\eta_1\wedge \eta_2\wedge \bar{\eta}_1\wedge \bar{\eta}_2.
 \end{equation}
 By \eqref{eq: c210}, \eqref{eq: JARKKId}, \eqref{eq: JARKIdK}, \eqref{eq: JARIdKK} and \eqref{eq: IdJARIdKK2}, we get that
 \begin{align}\label{eq: c210final}
  c_{2,1}(0)=&(2\lambda_0^2(\lambda_0-\bar{\lambda}_0)+\lambda_0(\lambda_0-\bar{\lambda}_0)^2)\frac{\langle A(0)\eta_1,\eta_1\rangle}{\langle\eta_2,J_4\eta_1\rangle}\notag\\
  &+\lambda_0(\lambda_0-\bar{\lambda}_0)^2\left(\frac{\langle A(0)\eta_1,\eta_2\rangle}{\langle\eta_1,J_{4}\eta_2\rangle}+\frac{\langle A(0)\eta_2,\eta_1\rangle}{\langle\eta_2,J_{4}\eta_1\rangle}-\frac{\langle A(0)\eta_1,\eta_1\rangle\langle\eta_2,J_4\eta_2\rangle}{\langle\eta_2,J_{4}\eta_1\rangle\langle\eta_1,J_{4}\eta_2\rangle}\right).
 \end{align}
 By \eqref{eq: c20}, \eqref{eq: c10}, \eqref{eq: lambda1t first step}, \eqref{eq: lambda2t first step}, \eqref{eq: a2} and \eqref{eq: c210final}, we get \eqref{eq: lambdajtsecondexpansion}.

 It remains to prove \eqref{eq: real part sum of bifurcated eigenvalues}.

 By \eqref{eq: lambdajtsecondexpansion}, we have that
 \begin{equation}
  \frac{\mathrm{d}}{\mathrm{d}t}(\lambda_1(t)+\lambda_2(t))|_{t=0}=\lambda_0\left(\frac{\langle A(0)\eta_1,\eta_1\rangle}{\langle\eta_2,J_4\eta_1\rangle}+\frac{\langle A(0)\eta_1,\eta_2\rangle}{\langle\eta_1,J_{4}\eta_2\rangle}+\frac{\langle A(0)\eta_2,\eta_1\rangle}{\langle\eta_2,J_{4}\eta_1\rangle}-\frac{\langle A(0)\eta_1,\eta_1\rangle\langle\eta_2,J_4\eta_2\rangle}{\langle\eta_2,J_{4}\eta_1\rangle\langle\eta_1,J_{4}\eta_2\rangle}\right).
 \end{equation}

 Since $\gamma(0)$ is symplectic, we have that
 \begin{align*}
  \langle\eta_2,J_{4}\eta_2\rangle=&\langle \gamma(0)\eta_2,J_{4}\gamma(0)\eta_2\rangle\\
  =&\langle \lambda_0\eta_2+\lambda_0\eta_1, J_{4}(\lambda_0\eta_2+\lambda_0\eta_1)\rangle\\
  =&\langle\eta_2,J_{4}\eta_2\rangle+\langle\eta_2,J_{4}\eta_1\rangle+\langle\eta_1,J_{4}\eta_2\rangle+\langle\eta_1,J_{4}\eta_1\rangle.
 \end{align*}
 By Lemma~\ref{lem: eta1eta2 Krein form}, $\langle\eta_1,J_{4}\eta_1\rangle=0$. Hence, we have that $\langle\eta_1,J_{4}\eta_2\rangle+\langle\eta_2,J_{4}\eta_1\rangle=0$. Note that $\langle\eta_1,J_{4}\eta_2\rangle=-\langle J_4\eta_1,\eta_2\rangle=-\overline{\langle\eta_2,J_4\eta_1\rangle}$. Hence, $\langle\eta_2,J_{4}\eta_1\rangle$ and $\langle\eta_1,J_{4}\eta_2\rangle$ are real. Therefore, we have that \[\frac{\langle A(0)\eta_1,\eta_2\rangle}{\langle\eta_1,J_{4}\eta_2\rangle}+\frac{\langle A(0)\eta_2,\eta_1\rangle}{\langle\eta_2,J_{4}\eta_1\rangle}=\frac{\langle A(0)\eta_1,\eta_2\rangle}{\langle\eta_1,J_{4}\eta_2\rangle}-\frac{\overline{\langle A(0)\eta_1,\eta_2\rangle}}{\langle\eta_1,J_{4}\eta_2\rangle}=\frac{\langle A(0)\eta_1,\eta_2\rangle}{\langle\eta_1,J_{4}\eta_2\rangle}-\frac{\overline{\langle A(0)\eta_1,\eta_2\rangle}}{\overline{\langle\eta_1,J_{4}\eta_2\rangle}},\]
 which is purely imaginary.

 Since $\langle\eta_2,J_4\eta_2\rangle=-\langle J_{4}\eta_2,\eta_2\rangle=-\overline{\langle\eta_2,J_4\eta_2\rangle}$, the quantity $\langle\eta_2,J_4\eta_2\rangle$ is purely imaginary. Since $\langle A(0)\eta_1,\eta_1\rangle$, $\langle\eta_2,J_{4}\eta_1\rangle$ and $\langle\eta_1,J_{4}\eta_2\rangle$ are real and $\langle\eta_2,J_4\eta_2\rangle$ is imaginary, $-\frac{\langle A(0)\eta_1,\eta_1\rangle\langle\eta_2,J_4\eta_2\rangle}{\langle\eta_2,J_{4}\eta_1\rangle\langle\eta_1,J_{4}\eta_2\rangle}$ is imaginary.

 Since $\langle A(0)\eta_1,\eta_1\rangle$ and $\langle\eta_1,J_{4}\eta_2\rangle$ are real, $\frac{\langle A(0)\eta_1,\eta_1\rangle}{\langle\eta_2,J_4\eta_1\rangle}$ is real.

 Therefore,
 \[\mathrm{Re} \left(\bar{\lambda}_0\frac{\mathrm{d}}{\mathrm{d}t}(\lambda_1(t)+\lambda_2(t))|_{t=0}\right)=\frac{\langle A(0)\eta_1,\eta_1\rangle}{\langle\eta_2,J_4\eta_1\rangle}.\]

 \begin{remark}\label{rem: determine the position of eigenvalues}
  Note that \eqref{eq: real part sum of bifurcated eigenvalues} is useful for studying the strong stability of $\gamma(t)$. A symplectic matrix $\gamma$ is called stable if $\sup_{n\in\mathbb{Z}}||\gamma^n||<\infty$, where $\mathbb{Z}$ denotes the set of integers. It is known that $\gamma$ is stable if it is diagonalizable and all its eigenvalues stay on the unit circle $U\subset\mathbb{C}$. A symplectic matrix $\gamma$ is called strongly stable if there exists a neighborhood of $\gamma$ in the space of symplectic matrices containing only stable symplectic matrices. It is not hard to see that if the four eigenvalues of $\gamma$ are simple and lie on $U$, then $\gamma$ must be strongly stable. In general, the strong stability is related to the Krein type of the eigenvalues. Such a characterization of strong stability was firstly formulated by Krein \cite{KreinMR0036379,KreinMR0043980}, and later independently by Moser \cite{MoserMR0096872}.

  If $\frac{\langle A(0)\eta_1,\eta_1\rangle}{\langle\eta_2,J_4\eta_1\rangle}>0$(resp. or $\frac{\langle A(0)\eta_1,\eta_1\rangle}{\langle\eta_2,J_4\eta_1\rangle}<0$), then there exists $\delta>0$ such that for $t\in(0,\delta)$, $\gamma(t)$ is unstable (resp. strongly stable) and for $t\in(-\delta,0)$, $\gamma(t)$ is strongly stable (resp. unstable).
  \end{remark}

  \begin{proof}[Proof of Remark~\ref{rem: determine the position of eigenvalues}]
  We give the proof for the case $\frac{\langle A(0)\eta_1,\eta_1\rangle}{\langle\eta_2,J_4\eta_1\rangle}>0$. The case $\frac{\langle A(0)\eta_1,\eta_1\rangle}{\langle\eta_2,J_4\eta_1\rangle}<0$ is quite similar.

  Let's first prove that for $t$ positive and sufficiently small, $\gamma(t)$ has four eigenvalues outside of $U$. Hence, such $\gamma(t)$ is unstable. Otherwise, there exists a sequence $\{t_n\}_{n\geq 1}$ strictly decreasing to $0$ such that $\gamma(t_n)$ has four semi-simple eigenvalues $\lambda_1(t_n)$, $\lambda_2(t_n)$, $\bar{\lambda}_1(t_n)$ and $\bar{\lambda}_2(t_n)$ on $U$ and that $\lim_{n\to\infty}\lambda_1(t_n)=\lim_{n\to\infty}\lambda_2(t_n)=\lambda_0$. Then, $\frac{1}{2}(\lambda_1(t_n)+\lambda_2(t_n))$ stay in the unit disk $D\subset \mathbb{C}$. Hence, $\lim_{n\to\infty}\mathrm{Re}\left(\bar{\lambda}_0\frac{\lambda_1(t_n)+\lambda_2(t_n)-2\lambda_0}{t_n}\right)\leq 0$, which contradicts with the assumption $\frac{\langle A(0)\eta_1,\eta_1\rangle}{\langle\eta_2,J_4\eta_1\rangle}>0$ and \eqref{eq: real part sum of bifurcated eigenvalues}.

  Let's show that for $t$ negative and sufficiently small, $\gamma(t)$ has four distinct eigenvalues on the unit circle $U\subset\mathbb{C}$. Hence, such $\gamma(t)$ is strongly stable. Otherwise, there exists a sequence $\{t_n\}_{n\geq 1}$ strictly increasing to $0$ such that $\gamma(t_n)$ has four eigenvalues $\lambda_1(t_n)=r(t_n)e^{\sqrt{-1}\theta(t_n)}$, $\lambda_2(t_n)=\frac{1}{r(t_n)}e^{\sqrt{-1}\theta(t_n)}$, $\bar{\lambda}_1(t_n)=r(t_n)e^{-\sqrt{-1}\theta(t_n)}$ and $\bar{\lambda}_2(t_n)=\frac{1}{r(t_n)}e^{-\sqrt{-1}\theta(t_n)}$ on $U$, where $\lim_{n\to\infty}\theta(t_n)=\theta_0$ and $\lim_{n\to\infty}r(t_n)=1$. (Recall that $\lambda_0=e^{\sqrt{-1}\theta_0}$.) Let $c=\mathrm{Im}\left(\bar{\lambda}_0\frac{\mathrm{d}}{\mathrm{d}t}(\lambda_1(t)+\lambda_2(t))|_{t=0}\right)$. Then, we have that
  \begin{align*}
   ct_n+o(t_n)=&\mathrm{Im}\left(\bar{\lambda}_0(\lambda_1(t_n)+\lambda_2(t_n)-2\lambda_0)\right)\\
   =&\mathrm{Im}\left(\left(r(t_n)+\frac{1}{r(t_n)}\right)e^{\sqrt{-1}(\theta(t_n)-\theta_0)}-2\right)\\
   =&\left(r(t_n)+\frac{1}{r(t_n)}\right)\sin(\theta(t_n)-\theta_0)\\
   =&2(\theta(t_n)-\theta_0)+o(\theta(t_n)-\theta_0).
  \end{align*}
  Hence, $\lim_{n\to\infty}\frac{\theta(t_n)-\theta_0}{t_n}=\frac{c}{2}$. Since $t_n<0$ and $r(t_n)+\frac{1}{r(t_n)}\geq 2$, we have that
  \begin{align*}
   \lim_{n\to\infty}\mathrm{Re}\left(\bar{\lambda}_0\frac{\lambda_1(t_n)+\lambda_2(t_n)-2\lambda_0}{t_n}\right)=&\lim_{n\to\infty}\mathrm{Re}\left(\frac{(r(t_n)+1/r(t_n))e^{\sqrt{-1}(\theta(t_n)-\theta_0)}-2}{t_n}\right)\\
   =&\lim_{n\to\infty}\frac{(r(t_n)+1/r(t_n))\cos(\theta(t_n)-\theta_0)-2}{t_n}\\
   \leq & \lim_{n\to\infty}\frac{2\cos(\theta(t_n)-\theta_0)-2}{t_n}\\
   =&\lim_{n\to\infty}\frac{2\cos(\theta(t_n)-\theta_0)-2}{\theta(t_n)-\theta_0}\cdot\frac{\theta(t_n)-\theta_0}{t_n}\\
   =&0,
  \end{align*}
  which contradicts with the assumption $\frac{\langle A(0)\eta_1,\eta_1\rangle}{\langle\eta_2,J_4\eta_1\rangle}>0$ and \eqref{eq: real part sum of bifurcated eigenvalues}.
 \end{proof}
\section{Proof of Theorem~\ref{thm: sum of eig epsilon}}\label{sect: proof of sum of eig epsilon}

Recall \eqref{defn: BTepsilon}, $\lambda_0\in U$, $\eta_1(t)=\gamma(t,0)\eta_1$ and $\eta_2(t)=\gamma(t,0)\eta_2$. Note that
\begin{align*}
 \langle B(T,0)\eta_1,\eta_2\rangle=& \int_{0}^{T}\left\langle(\gamma(T,0)^{-1})^{T}\gamma(t,0)^{T}\frac{\partial}{\partial\varepsilon}A(t,0)\gamma(t,0)\gamma(T,0)^{-1}\eta_1,\eta_2\right\rangle\,\mathrm{d}t\\
 =&\int_{0}^{T}\left\langle\frac{\partial}{\partial\varepsilon}A(t,0)\gamma(t,0)\gamma(T,0)^{-1}\eta_1,\gamma(t,0)\gamma(T,0)^{-1}\eta_2\right\rangle\,\mathrm{d}t.
\end{align*}
Note that $\gamma(T,0)\eta_1=\lambda_0\eta_1$ and $\gamma(T,0)\eta_2=\lambda_0\eta_2+\lambda_0\eta_1$. Hence, $\gamma(T,0)^{-1}\eta_1=\bar{\lambda}_0\eta_1$ and $\gamma(T,0)^{-1}\eta_2=\bar{\lambda}_0\eta_2-\bar{\lambda}_0\eta_1$. Therefore, we obtain that
\begin{equation}\label{eq: BT0eta1eta2}
 \langle B(T,0)\eta_1,\eta_2\rangle=\int_{0}^{T}\left\langle\frac{\partial}{\partial\varepsilon}A(t,0)\eta_1(t),(\eta_2(t)-\eta_1(t))\right\rangle\,\mathrm{d}t.
\end{equation}
Similarly, we have that
\begin{equation}\label{eq: BT0eta2eta1}
 \langle B(T,0)\eta_2,\eta_1\rangle=\int_{0}^{T}\left\langle\frac{\partial}{\partial\varepsilon}A(t,0)(\eta_2(t)-\eta_1(t)),\eta_1(t)\right\rangle\,\mathrm{d}t.
\end{equation}
and that
\begin{equation}\label{eq: BT0eta1eta1}
 \langle B(T,0)\eta_1,\eta_1\rangle=\int_{0}^{T}\left\langle\frac{\partial}{\partial\varepsilon}A(t,0)\eta_1(t),\eta_1(t)\right\rangle\,\mathrm{d}t.
\end{equation}
The result follows from Theorem~\ref{thm: sum of eig t} and \eqref{eq: BT0eta1eta2}, \eqref{eq: BT0eta2eta1} and \eqref{eq: BT0eta1eta1}.

\bibliographystyle{amsalpha}
\providecommand{\bysame}{\leavevmode\hbox to3em{\hrulefill}\thinspace}
\providecommand{\MR}{\relax\ifhmode\unskip\space\fi MR }
\providecommand{\MRhref}[2]{%
  \href{http://www.ams.org/mathscinet-getitem?mr=#1}{#2}
}
\providecommand{\href}[2]{#2}

\end{document}